\documentclass[11pt]{amsart}
\usepackage{amsmath,amssymb,amsthm,mathtools}
\usepackage[margin=1.10in]{geometry}
\usepackage{hyperref}
\usepackage{booktabs}
\hypersetup{colorlinks=true,linkcolor=blue,citecolor=blue,urlcolor=blue}

\theoremstyle{plain}
\newtheorem{theorem}{Theorem}[section]
\newtheorem{lemma}[theorem]{Lemma}
\newtheorem{proposition}[theorem]{Proposition}
\newtheorem{corollary}[theorem]{Corollary}
\theoremstyle{definition}
\newtheorem{definition}[theorem]{Definition}
\newtheorem{remark}[theorem]{Remark}

\DeclareMathOperator{\Cat}{Cat}
\newcommand{\W}{\mathcal{W}}
\newcommand{\N}{\mathbb{N}}
\newcommand{\Q}{\mathbb{Q}}
\newcommand{\Z}{\mathbb{Z}}

\begin{document}

\title{Nonnesting permutations avoiding 123}
\author{Eric Cowan}
\address{Independent researcher}
\email{eric.c.cowan@gmail.com}
\date{September 5, 2026}

\begin{abstract}
A \emph{nonnesting permutation} of the multiset $\{1,1,2,2,\dots,n,n\}$ is a word
containing no subsequence $abba$ with $a \neq b$. Elizalde and Luo enumerated
nonnesting permutations avoiding each set of two or more patterns of length three,
and left open the enumeration of those avoiding the single pattern $123$
(Problem~1 of their paper). We solve this problem. Writing $c_n$ for the number of nonnesting permutations of
$\{1,1,\dots,n,n\}$ avoiding $123$ and $C(z)=\sum_{n\ge 0}c_nz^n$, we prove
\[
  C(z) \;=\; 1 + \frac{z(1+U)}{2 - z(1+U)(1+V)},
  \qquad V = 1+zV^2, \qquad U = 1+3z+zU^2,
\]
where $U$ and $V$ are the formal power series branches with constant term $1$. In
particular $C$ is algebraic of degree exactly $4$ over $\Q(z)$, and
$c_n \sim \frac{9+6\sqrt3}{8\sqrt\pi}\, 6^n n^{-3/2}$. The proof constructs a
generating tree for these objects whose labels consist of two nonnegative
integers, and applies the kernel method twice. By reversal, the same generating
function and asymptotic formula enumerate nonnesting permutations avoiding $321$.
\end{abstract}

\maketitle

\section{Introduction}

Let $\W_n$ denote the set of words $w=w_1w_2\cdots w_{2n}$ using each of the
labels $1,2,\dots,n$ exactly twice, subject to two conditions:
\begin{enumerate}
\item[(N)] $w$ is \emph{nonnesting}: there are no indices $i<j<k<\ell$ and labels
  $a\neq b$ with $w_i=w_\ell=a$ and $w_j=w_k=b$. Equivalently, $w$ avoids both of
  the patterns $1221$ and $2112$.
\item[(A)] $w$ \emph{avoids} $123$: there are no indices $i<j<k$ with
  $w_i<w_j<w_k$.
\end{enumerate}
Set $c_n = |\W_n|$, with $c_0=1$, and
$C(z)=\sum_{n\ge0}c_nz^n$.

Words satisfying (N) alone were introduced by Elizalde and Luo \cite{EL} under the
name \emph{nonnesting permutations} of the multiset $\{1,1,\dots,n,n\}$; there are
$n!\,\Cat(n)=(2n)!/(n+1)!$ of them, where $\Cat(n)$ is the $n$th Catalan number.
Elizalde and Luo enumerated the nonnesting permutations avoiding each set of two or
more patterns of length $3$, as well as some sets of patterns of length $4$. For a
single pattern $\sigma\in S_3$, reversal and complementation both preserve the
nonnesting property, so the problem reduces to the two cases $\sigma=123$ and
$\sigma=132$; Elizalde and Luo left both open, listing the $123$ case as
Problem~1. The $132$ case (equivalently $231$, $213$, $312$) was subsequently
solved by Archer and Laudone \cite{AL}, whose generating function satisfies a cubic
equation over $\Q(z)$. For the remaining pattern, which they state as $321$
(equivalent to $123$ by reversal; see Corollary~\ref{cor:reverse}), they noted
that only the bounds $\Cat(n)\le c_n\le \Cat(n)^2$ were known, and the published
version of \cite{EL} states that no formula for $c_n(123)$ is available. The sequence $(c_n)_{n\ge1}$
is \href{https://oeis.org/A382361}{OEIS A382361}, where eleven terms are listed
and no formula is given.

Our main result is the following.

\begin{theorem}\label{thm:main}
Let $V$ and $U$ be the formal power series with constant term $1$ satisfying
$V=1+zV^2$ and $U=1+3z+zU^2$. Then
\[
  C(z) \;=\; 1 + \frac{z(1+U)}{2 - z(1+U)(1+V)}.
\]
\end{theorem}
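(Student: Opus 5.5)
The abstract already names the strategy: a generating tree with two-integer labels, followed by the kernel method applied twice. I will build the words of $\W_n$ by inserting the two copies of the largest label $n$ into a word of $\W_{n-1}$. Avoiding $123$ forces both copies of $n$ to lie to the left of every ascent $w_i<w_j$ whose smaller letter $w_i$ is itself preceded by something smaller. The nonnesting condition forces the pair $n\,\dots\,n$ to neither nest inside another pair nor contain one. In a nonnesting word, the first occurrences of the labels appear in the same order as the second occurrences. So inserting the pair $(n,n)$ amounts to choosing a position among the first occurrences and a compatible position among the second occurrences.

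The plan is to track two statistics of a word $w\in\W_{n-1}$:
\begin{itemize}
\item $a$, roughly the number of admissible slots for the first copy of $n$, i.e.\ the slots lying before the first ``$12$'' pattern that could become the bottom of a $123$;
\item $b$, the number of admissible slots for the second copy, subject to the noncrossing-order constraint.
\end{itemize}
I will then show that the children of a node labelled $(a,b)$ have labels given by an explicit rule, something like $(i,j)$ with $1\le i\le a$ and $j\le b+1$, with a correction depending on whether the two copies of $n$ are adjacent. Establishing this succession rule, and in particular checking that $(a,b)$ really determines the multiset of children's labels, is the combinatorial heart of the proof and the step I expect to be the main obstacle. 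I would first compute $c_1,\dots,c_6$ by brute force and compare them with A382361, to calibrate which statistics to choose.

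Given the rule, let $F(x,y)=\sum_{w}z^{|w|}x^{a(w)}y^{b(w)}$. The rule translates into a functional equation of the form
\[
K(x,y)F(x,y)=\text{(root term)}+A(x,y)F(x,1)+B(x,y)F(1,y)+\dots,
\]
with a kernel $K$ that is quadratic in one variable. I would eliminate the variables in two stages:
\begin{itemize}
\item First, apply the kernel method in $y$: substitute the power-series root $y=Y(z,x)$ of the kernel. This expresses the unknown boundary function in $y$ in terms of one in $x$, leaving a one-catalytic-variable equation.
\item Second, apply the kernel method again in $x$. A Catalan-type root should produce $V=1+zV^2$. The other root, shifted by the extra linear terms, should produce $U=1+3z+zU^2$. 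The factor $3z$ plausibly comes from three ways of continuing at the boundary.
\end{itemize}
Finally, I would assemble $C(z)=1+F(1,1)$, simplify to the stated $1+z(1+U)/(2-z(1+U)(1+V))$, and confirm the result against the computed initial terms.
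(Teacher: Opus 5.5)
Your proposal restates the strategy named in the abstract but does not carry out any step that contains the content. The label is only ``roughly'' a count of slots, the succession rule is guessed (``something like $(i,j)$ with $1\le i\le a$, $j\le b+1$''), and no functional equation is written down.

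The one concrete criterion you do state is wrong. Since $n$ is a strict maximum, it can only play the role of the ``$3$''. So inserting $n\,n$ creates a $123$ exactly when some pair $i<j$ with $w_i<w_j$ lies entirely before the \emph{second} copy of $n$. An ascent whose smaller letter is itself preceded by something smaller would already be a $123$ in $w$. Your condition is therefore vacuous and would, for instance, accept $1122\mapsto 112233$. The correct constraint is that both copies lie inside the longest weakly decreasing prefix $w_1\cdots w_L$.

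The missing idea is what makes a two-integer label close up: the structure of that prefix. By FIFO closure, a closing letter equals the label of the earliest open arc, which strictly exceeds the label of every later-opened arc. So in a weakly decreasing prefix a closing can occur only when exactly one arc is open, and the prefix's event word is $(UD)^rU^s$. With label $(r,s)$, the nonnesting constraints become two conditions. First, when $s\ge1$ the first copy must precede position $2r+1$; otherwise the new arc nests inside an arc still open at $L$. Second, the new arc must neither contain nor sit inside any of the $r$ completed arcs. For $r\ge1$ this yields children $(r+1,s)$, $(0,2)$, $(k,1)^2,(k+1,0)^2$ for $0\le k\le r-2$, and $(r-1,1)^{s+1},(r,0)^{s+1}$. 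That is not of the rectangular form you guessed.

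Your kernel plan also misdescribes what happens, and it would not work generically. Substituting a root $Y(z,x)$ into a two-catalytic-variable equation does not in general leave a solvable one-variable equation. In the paper, the specific rule gives the identities $f_{n,1,0}=[n=1]+f_{n,0,1}$, $f_{n,r,1}=f_{n,r+1,0}-[n=r+1]$, and $f_{n,r,s}=f_{n-r,0,s}$ for $s\ge2$. These express $B(u,v)$ entirely through $A(v)$ and $G(u)=B(u,0)$. The system then decouples into two one-variable equations:
\begin{itemize}
\item kernel $1-v+zv^2$, with root $V$, giving $P=\alpha V+(V-1)Q$;
\item kernel $1+3z-u+zu^2$, with root $U$.
\end{itemize}
After both substitutions the unknown $\alpha$ cancels. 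So $V$ and $U$ are roots of two different kernels, not the two roots of one kernel. Without the shape lemma, the explicit rule, and these identities, there is no equation to which your template can be applied.
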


Explicitly, $V=\sum_{n\ge0}\Cat(n)z^n$ is the Catalan generating function, while
\[
  U \;=\; \frac{1-\sqrt{(1-6z)(1+2z)}}{2z}
    \;=\; 1+4z+8z^2+32z^3+128z^4+576z^5+\cdots .
\]
Expanding,
\[
C(z)=1+z+4z^2+17z^3+82z^4+406z^5+2070z^6+10729z^7+56394z^8+\cdots,
\]
extending A382361 by, among others,
$c_{12}=47245644$, $c_{13}=258581288$, $c_{14}=1422695978$.

\begin{corollary}\label{cor:alg}
$C$ is algebraic of degree exactly $4$ over $\Q(z)$: it is the unique formal power
series with $C(0)=1$ and $[z]C=1$ satisfying
\begin{multline*}
  (1-4z+4z^2) + (-4+16z-16z^2)\,C + (5-22z+36z^2+8z^3)\,C^2 \\
  {}+ (-2+12z-40z^2-16z^3)\,C^3 + (-2z+17z^2+12z^3+4z^4)\,C^4 \;=\; 0.
\end{multline*}
\end{corollary}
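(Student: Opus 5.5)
The plan is to get the quartic from Theorem~\ref{thm:main} as a norm from an explicit biquadratic extension, prove irreducibility by checking that the four conjugates of $C$ are distinct, and prove uniqueness with a local analysis at $z=0$.

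\emph{Step 1: the field containing $C$.} Put $\Delta_1 = 1-4z$ and $\Delta_2=(1-6z)(1+2z)$. Then $V=(1-\sqrt{\Delta_1})/(2z)$ and $U=(1-\sqrt{\Delta_2})/(2z)$. By Theorem~\ref{thm:main}, $C$ lies in $K=\Q(z)(\sqrt{\Delta_1},\sqrt{\Delta_2})$. This field has degree $4$ over $\Q(z)$. Indeed, $\Delta_1$, $\Delta_2$ and $\Delta_1\Delta_2=(1-4z)(1-6z)(1+2z)$ are all non-squares in $\Q(z)$, since each is a squarefree polynomial of positive degree. Hence $K$ is Galois with group $(\Z/2)^2$, generated by $\sigma_1\colon\sqrt{\Delta_1}\mapsto-\sqrt{\Delta_1}$ and $\sigma_2\colon\sqrt{\Delta_2}\mapsto-\sqrt{\Delta_2}$. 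Equivalently, $\sigma_1$ sends $V\mapsto \bar V = 1/(zV)$ and fixes $U$, while $\sigma_2$ sends $U\mapsto \bar U = (1+3z)/(zU)$ and fixes $V$. The last two formulas come from the product of the roots of each quadratic.

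\emph{Step 2: the quartic.} Set $R(U,V)=1+z(1+U)/(2-z(1+U)(1+V))$ and form
\[
P(Y)=\prod_{(U',V')}\bigl(Y-R(U',V')\bigr),
\]
the product running over the four choices $U'\in\{U,\bar U\}$ and $V'\in\{V,\bar V\}$. The coefficients of $P$ are symmetric under both involutions, so they lie in $\Q(z)$. To compute them, I would pair the factors: first multiply the two factors differing by $V\leftrightarrow\bar V$, which gives an expression in $V+\bar V=1/z$ and $V\bar V=1/z$. Then multiply the resulting conjugate pair over $U$, using $U+\bar U=1/z$ and $U\bar U=(1+3z)/z$. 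After clearing denominators, $P$ should be proportional to the displayed quartic. This is a routine but lengthy elimination that I would carry out with a resultant computation, $\operatorname{Res}_U\operatorname{Res}_V$. As a check, I would confirm that the series $1+z+4z^2+17z^3+\cdots$ satisfies the displayed equation to high order.

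\emph{Step 3: degree exactly $4$.} This is the main point needing an argument. Since $C$ is a root of $P$, its degree is $4$ exactly when its orbit under the Galois group has size $4$, that is, when the four values $R(U',V')$ are pairwise distinct. I would show this by expanding each conjugate as a Laurent or Puiseux series at $z=0$, using $\bar V = z^{-1}-1-z-\cdots$ and $\bar U=z^{-1}-1-4z-\cdots$. Then:
\begin{itemize}
\item $R(U,V)=C$ has constant term $1$.
\item $R(U,\bar V)$ has denominator $2-z(1+U)(1+\bar V)\to 2-2=0$, so it needs closer inspection. Its leading term should differ from that of the others.
\item $R(\bar U,V)$ and $R(\bar U,\bar V)$ have numerators tending to $1$.
\end{itemize}
Comparing the first one or two terms should separate all four. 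The delicate part is exactly the cancellations in denominators such as $2-z(1+\bar U)(1+\bar V)$, which tends to $2-1=1$ only after the leading orders are tracked carefully.

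An alternative is to rule out a quadratic factorization directly. I would fall back on it only if the series comparison becomes messy. For instance, reduce $P$ modulo a suitable prime, or specialize at a rational $z$ where $\Delta_1,\Delta_2,\Delta_1\Delta_2$ remain non-squares, and show irreducibility there.

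\emph{Step 4: uniqueness.} At $z=0$ the quartic becomes $1-4C+5C^2-2C^3=-(C-1)^2(2C-1)$, so $C=1$ is a double root and the implicit function theorem does not apply directly. I would substitute $C=1+y$ and read the Newton polygon of $P(z,1+y)$ in the $(z,y)$ variables. It should show two power-series branches through $y=0$ with distinct linear terms, one of which has $[z]y=1$. Prescribing $[z]C=1$ selects that branch. Beyond the linear term, its coefficients are then determined recursively, because the linearized equation becomes nondegenerate at that order. Hence the solution is unique.
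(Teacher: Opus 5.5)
Your proposal is correct and follows essentially the paper's route: the biquadratic field $\Q(z)(\sqrt{1-4z},\sqrt{(1-6z)(1+2z)})$, an iterated resultant for the quartic, pairwise distinct Laurent expansions of the four conjugates for minimality, and a double-root analysis at $z=0$ for uniqueness. Your Newton-polygon version of the uniqueness step works, since with $C=1+y$ the displayed quartic has $y^0$-coefficient $z^2(1+2z)^2$ and $y^2$-coefficient $-1+O(z)$, giving edge polynomial $1-c^2$ with simple roots $\pm1$; the paper instead reads off the two power-series roots $1+T$ and $1-z-2z^2-\cdots$ directly from the conjugate list.

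Two of your Step~3 forecasts are wrong, though harmlessly so. First, $R(\bar U,V)$ also has a vanishing denominator and equals $-z^{-1}-5-47z-\cdots$. Second, $2-z(1+\bar U)(1+\bar V)\sim -z^{-1}$ rather than tending to $1$, because $z(1+\bar U)\to1$ while $1+\bar V\sim z^{-1}$; hence $R(\bar U,\bar V)=1-z-2z^2-\cdots$. The constant terms $1,\tfrac12,\infty,1$ together with the linear coefficients $\pm1$ still separate all four conjugates.
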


\begin{corollary}\label{cor:asy}
As $n\to\infty$,
\[
  c_n \;\sim\; \frac{9+6\sqrt3}{8\sqrt{\pi}}\; 6^n\, n^{-3/2}
  \;=\; 1.36761\ldots\times 6^n n^{-3/2}.
\]
\end{corollary}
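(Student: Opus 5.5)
The plan is a standard singularity analysis of the closed form in Theorem~\ref{thm:main}. Write $A(z)=z(1+U)$ and $W(z)=1+V$, so that $C=1+A/(2-AW)$. The only singularities of $U=\bigl(1-\sqrt{(1-6z)(1+2z)}\bigr)/(2z)$ are the branch points $z=1/6$ and $z=-1/2$; the point $z=0$ is removable. $V$ is analytic for $|z|<1/4$. So on the closed disk $|z|\le 1/6$ the numerator and $W$ are analytic except at $z=1/6$, and it remains to control the denominator.

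\emph{Step 1: the denominator does not vanish.} First evaluate at the singularity. At $z=1/6$ we get $U=1/(2z)=3$ and $V=3(1-1/\sqrt3)=3-\sqrt3$. Hence $A(1/6)=2/3$, $W(1/6)=4-\sqrt3$, and
\[
2-AW=\tfrac{2(\sqrt3-1)}{3}>0 .
\]
Now extend to the disk. $U$ and $V$ have nonnegative coefficients; for $U$ this follows from $U=1+3z+zU^2$. Hence $AW$ has nonnegative coefficients and converges at $z=1/6$. So for $|z|\le 1/6$ we have $|A(z)W(z)|\le A(1/6)W(1/6)=\tfrac{2(4-\sqrt3)}{3}<2$. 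Therefore $2-AW\neq0$ on the closed disk, and $z=1/6$ is the unique dominant singularity of $C$. The function $C$ is $\Delta$-analytic there, since $U$ continues analytically past the circle $|z|=1/6$ away from $1/6$ and the strict inequality persists in a small $\Delta$-domain.

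\emph{Step 2: local expansion.} Put $s=\sqrt{1-6z}$. Near $z=1/6$ we have $\sqrt{1+2z}\to 2/\sqrt3$, so
\[
U=3-2\sqrt3\,s+O(s^2), \qquad A=\tfrac23-\tfrac{\sqrt3}{3}\,s+O(s^2),
\]
while $W$ is analytic at $1/6$. Since $\frac{\partial}{\partial A}\frac{A}{2-AW}=\frac{2}{(2-AW)^2}$, at $z=1/6$ this derivative equals $\frac{9(2+\sqrt3)}{4}$. Hence
\[
C=c_0-\tfrac{9+6\sqrt3}{4}\,s+O(s^2),
\]
where $c_0$ is a constant and the $O(s^2)$ term contains only analytic terms and higher half-integer powers.

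\emph{Step 3: transfer.} The standard transfer theorem gives $[z^n]\bigl(-\sqrt{1-6z}\bigr)\sim 6^n n^{-3/2}/(2\sqrt\pi)$. The remaining terms contribute $O(6^n n^{-5/2})$. This yields $c_n\sim\frac{9+6\sqrt3}{8\sqrt\pi}6^n n^{-3/2}$.

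The main point needing care is Step~1, i.e.\ excluding zeros of the denominator and other singularities on $|z|=1/6$. The positivity and triangle-inequality argument above handles it cleanly once nonnegativity of the coefficients of $U$ is noted. The rest is routine algebra with $\sqrt3$, which I would double-check numerically against the computed values of $c_n$.
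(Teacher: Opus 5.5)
Your proposal is correct and follows essentially the same route as the paper. Both arguments use nonnegativity of the coefficients to bound $|z(1+U)(1+V)|$ on $|z|\le 1/6$ by its value $\tfrac{2(4-\sqrt3)}{3}<2$ at $z=1/6$, which rules out zeros of the denominator, and then expand linearly in $\sqrt{1-6z}$ and apply the transfer theorem. The only cosmetic difference is that you differentiate with respect to $A=z(1+U)$ rather than $U$; this gives the same coefficient $-\tfrac{9+6\sqrt3}{4}$.
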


\begin{corollary}\label{cor:reverse}
Let $d_n$ be the number of nonnesting permutations of
$\{1,1,\dots,n,n\}$ avoiding $321$. Then $d_n=c_n$ for every $n\geq0$.
Consequently, their generating function is $C(z)$, it has algebraic degree
exactly $4$ over $\Q(z)$, and
\[
  d_n \;\sim\; \frac{9+6\sqrt3}{8\sqrt{\pi}}\;6^n n^{-3/2}.
\]
\end{corollary}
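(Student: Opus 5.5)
The plan is to reduce Corollary~\ref{cor:reverse} to Theorem~\ref{thm:main} and Corollaries~\ref{cor:alg} and~\ref{cor:asy} through an explicit bijection. The bijection is reversal $\rho\colon w_1w_2\cdots w_{2n}\mapsto w_{2n}\cdots w_2w_1$. Once $d_n=c_n$ is known for every $n$, the remaining assertions are immediate: the generating function of $(d_n)$ is literally $C(z)$, so it has the algebraic degree established in Corollary~\ref{cor:alg}, and the asymptotic formula of Corollary~\ref{cor:asy} transfers verbatim.

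To prove $d_n=c_n$, I would check that $\rho$ maps $\W_n$ bijectively onto the set of nonnesting permutations of $\{1,1,\dots,n,n\}$ avoiding $321$. First, $\rho$ preserves the multiset of letters, since every label still appears exactly twice. Second, $\rho$ is an involution, so it suffices to show that it sends each condition to the corresponding one.

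For condition (N), I use the characterization as avoidance of $1221$ and $2112$. Both patterns are palindromes. Suppose $\rho(w)$ has an occurrence $abba$ at positions $i<j<k<\ell$. Then $w$ has the letters $a,b,b,a$ at the mirrored positions $2n+1-\ell<2n+1-k<2n+1-j<2n+1-i$. This is again an occurrence of $abba$. Hence $w$ is nonnesting if and only if $\rho(w)$ is.

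For condition (A), an occurrence $w_i<w_j<w_k$ with $i<j<k$ becomes, in $\rho(w)$, the three letters $w_k>w_j>w_i$ at increasing positions. That is an occurrence of $321$, and the converse holds by the same argument. So $w$ avoids $123$ exactly when $\rho(w)$ avoids $321$.

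There is no real obstacle here. The only point needing care is to record explicitly that reversal maps the nonnesting class to itself; this is the palindromic-pattern observation above. With it, the count $d_n=c_n$, including the degenerate case $n=0$, and all consequences follow directly from the earlier results.
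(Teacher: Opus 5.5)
Your proposal is correct and follows the paper's own proof: reversal is an involution that preserves nonnesting because the forbidden pattern $abba$ is a palindrome, and it exchanges $123$ and $321$ occurrences. The generating function, degree and asymptotic claims are then inherited directly from Theorem~\ref{thm:main} and Corollaries~\ref{cor:alg} and~\ref{cor:asy}.
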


\begin{proof}
Reversal is an involution. It preserves nonnesting because the forbidden
configuration $abba$ reads the same backwards, and it interchanges $123$ and
$321$ subsequences. Thus it restricts to the required size-preserving
bijection; the remaining claims follow from Theorem~\ref{thm:main} and
Corollaries~\ref{cor:alg} and~\ref{cor:asy}.
\end{proof}

Corollary~\ref{cor:asy} sharpens the bounds $\Cat(n)\le c_n\le \Cat(n)^2$ of
\cite{AL}, which confine the exponential growth rate to the interval $[4,16]$, to
the exact value $6$.

The proof of Theorem~\ref{thm:main} occupies Sections~\ref{sec:tree}--\ref{sec:kernel}.
In Section~\ref{sec:tree} we construct a generating tree for $\bigcup_n\W_n$ in
which each object carries a label $(r,s)\in\N^2$ recording the shape of its
longest weakly decreasing prefix, and we prove that the children of an object
depend only on its label. Section~\ref{sec:eqs} converts the resulting succession
rule into functional equations, and Section~\ref{sec:kernel} solves them by two
applications of the kernel method. Section~\ref{sec:verif} records the
computational checks.

\section{Preliminaries}\label{sec:prelim}

Throughout, $w\in\W_n$ is read left to right. For a label $a$ we write $p_1(a)<p_2(a)$
for the two positions carrying $a$, and call the pair $\{p_1(a),p_2(a)\}$ the
\emph{arc} of $a$. An arc is \emph{open} at position $k$ if $p_1(a)<k\le p_2(a)$;
informally, reading position by position, a first occurrence opens an arc and a
second occurrence closes one.

\begin{lemma}[FIFO closure]\label{lem:fifo}
Let $w$ be nonnesting and let $k$ be a position carrying a second occurrence.
Among the arcs open immediately before $k$, the one closed at position $k$ is the
one opened earliest.
\end{lemma}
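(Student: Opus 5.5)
The plan is a direct contradiction argument from the nonnesting condition (N). Let $k$ carry the second occurrence of a label $b$, so $p_2(b)=k$. The arc of $b$ is open immediately before $k$, since $p_1(b)<k\le p_2(b)$. Now let $a\neq b$ be any other label whose arc is open immediately before $k$. This should mean $p_1(a)<k$ and $p_2(a)\ge k$. Since position $k$ carries $b$, in fact $p_2(a)>k$.

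The claim to prove is $p_1(b)<p_1(a)$ for every such $a$. Suppose instead that $p_1(a)<p_1(b)$; the two first positions are distinct because they carry different labels. Then
\[
  p_1(a)<p_1(b)<p_2(b)=k<p_2(a),
\]
so the letters at these four positions read $a\,b\,b\,a$. This is exactly the configuration forbidden by (N), giving the contradiction. Hence $b$ was opened before every other open arc, so it is the earliest-opened one, as the statement asserts.

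I expect no real obstacle. The only points needing care are the convention that "open immediately before $k$" means $p_1<k\le p_2$, and the observation that $p_2(a)\neq k$ for $a\ne b$; this is what makes the final inequality strict.
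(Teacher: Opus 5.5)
Your proposal is correct and is essentially the paper's proof: both assume some other open arc $a$ was opened before $b$, note that $p_2(a)>k=p_2(b)$, and obtain the forbidden nesting $p_1(a)<p_1(b)<p_2(b)<p_2(a)$. Your explicit remarks that $b$'s own arc is open before $k$ and that $p_2(a)\neq k$ just spell out details the paper leaves implicit.
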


\begin{proof}
Suppose arcs $a$ and $b$ are both open immediately before $k$, with
$p_1(a)<p_1(b)$, and that $p_2(b)=k$. Then $p_2(a)>k=p_2(b)$, so
$p_1(a)<p_1(b)<p_2(b)<p_2(a)$, which is a nesting.
\end{proof}

Consequently a nonnesting word is determined by its underlying Dyck path (the
sequence of openings and closings) together with the assignment of labels to the
openings, which recovers $|\{w : \text{(N)}\}| = n!\,\Cat(n)$.

\begin{definition}
The \emph{event word} of a prefix $w_1\cdots w_L$ is the word $e_1\cdots e_L$ over
$\{U,D\}$ with $e_i=U$ if $w_i$ is a first occurrence in $w$ and $e_i=D$ otherwise.
\end{definition}

We will use the following elementary consequence of (A) repeatedly: if $w$ avoids
$123$ and $w_i<w_j$ for some $i<j$, then no letter after position $j$ exceeds
$w_j$; in particular every letter following an ascent is at most the top of that
ascent.

\section{A generating tree}\label{sec:tree}

\subsection{The label}

\begin{definition}\label{def:label}
For $w\in\W_n$ let $L=L(w)$ be the length of the longest weakly decreasing prefix
of $w$, that is, the largest $L$ with $w_1\ge w_2\ge\cdots\ge w_L$. Let $e$ be the
event word of $w_1\cdots w_L$.
\end{definition}

\begin{lemma}\label{lem:shape}
For every $w\in\W_n$ the event word $e$ of Definition~\ref{def:label} has the form
$(UD)^rU^s$ for unique integers $r,s\ge0$, and $2r+s=L$.
\end{lemma}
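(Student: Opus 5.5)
The plan is to analyse the weakly decreasing prefix $w_1\ge\cdots\ge w_L$ directly. Equal adjacent letters $w_i=w_{i+1}=a$ must be the two occurrences of $a$, so they form a pattern $UD$. Any letter not in such an adjacent equal pair is strictly below its predecessor and strictly above its successor inside the prefix, so it appears only once there.

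First I show that within the prefix, every $D$ is immediately preceded by the $U$ of the same label. If $w_i$ is a second occurrence at some $i\le L$, its first occurrence $w_j$ ($j<i$) carries the same value. Weak decrease then forces $w_j=w_{j+1}=\cdots=w_i$. Since each value occurs only twice, this gives $j=i-1$. So the $D$'s of $e$ come only inside adjacent $UD$ blocks, and $e$ is a word in the blocks $UD$ and $U$.

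Second I show that no $UD$ block follows a lone $U$. Suppose a lone first occurrence $b=w_j$ is followed later in the prefix by a block $aa$ at positions $i,i+1$ with $j<i$. Weak decrease and distinctness of labels give $b>a$. The second occurrence of $b$ lies after $L$, hence after $i+1$. Then $p_1(b)<p_1(a)<p_2(a)<p_2(b)$, which is a nesting $baab$ and is forbidden by (N). So all $UD$ blocks precede all lone $U$'s, and $e=(UD)^rU^s$.

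Uniqueness of $r,s$ is immediate, since $r$ is the number of $D$'s and $s$ is the number of $U$'s minus $r$. Length counting gives $2r+s=L$. The only delicate step is the nesting argument, specifically checking that the second occurrence of the lone letter really lies outside the prefix; this follows from the first step, since that step shows any letter with both occurrences in the prefix sits in a $UD$ block. Condition (A) is not needed.
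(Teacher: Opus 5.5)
Your proof is correct, but it takes a different route from the paper's.

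The paper argues on the Dyck path. It combines Lemma~\ref{lem:fifo} with weak decrease to show that a $D$ can occur in $e$ only when exactly one arc is open, i.e.\ from height $1$. It then reads the shape off the height profile: from height $0$ the next event must be $U$; from height $1$ one either closes a $UD$ block or climbs to height $2$, after which no $D$ is possible.

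You instead separate the two ingredients. Weak decrease, together with the fact that each label occurs exactly twice, already forces any label with both copies in the prefix to appear as an adjacent pair $aa$; nonnesting plays no role in this step. You then invoke (N) directly, bypassing Lemma~\ref{lem:fifo}, to forbid a pair $aa$ after an unpaired first occurrence $b$, via the configuration $b\,a\,a\cdots b$. Your justification that the second copy of $b$ lies beyond $L$, obtained from your first step, closes the only potential gap.

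What your version buys:
\begin{itemize}
\item It is slightly more elementary and pinpoints exactly where (N) is needed. For example, the nesting word $2112$ has prefix $211$ with event word $UUD$.
\item It yields an explicit description of the prefix as $a_1a_1a_2a_2\cdots a_ra_r\,b_1b_2\cdots b_s$, with $a_1>\cdots>a_r>b_1>\cdots>b_s$ and every $b_t$ closing after position $L$. This is exactly the picture used later in Lemma~\ref{lem:insert}.
\end{itemize}

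What the paper's version buys: it stays within the arc-and-height formalism of Section~\ref{sec:prelim}, reusing Lemma~\ref{lem:fifo}, which is the viewpoint on nonnesting words used throughout the rest of the paper.

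As you note, neither argument uses (A).
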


\begin{proof}
We first show that no $D$ occurs in $e$ at a moment when two or more arcs are
open. Suppose $e_k=D$ and that immediately before position $k$ the open arcs were
opened at positions $i_1<i_2<\cdots<i_m$ with $m\ge2$. The labels
$w_{i_1},\dots,w_{i_m}$ are pairwise distinct, and $i_1<\cdots<i_m\le L$, so by
weak decrease $w_{i_1}\ge\cdots\ge w_{i_m}$ and hence $w_{i_1}>w_{i_m}$. By
Lemma~\ref{lem:fifo}, $w_k=w_{i_1}$. But $i_m<k\le L$, so weak decrease gives
$w_k\le w_{i_m}<w_{i_1}=w_k$, a contradiction.

Let $h_k$ denote the number of arcs open after the first $k$ events, so $h_0=0$,
a $U$ raises $h$ by one and a $D$ lowers it by one. The preceding paragraph shows
that if $e_k=D$ then $h_{k-1}=1$, and consequently $h_k=0$. Now read $e$ from the
left. From height $0$ the next event, if any, is a $U$ (a $D$ would need
$h_{k-1}=1$), bringing the height to $1$. If the following event is a $D$, the two
events form a block $UD$ and the height returns to $0$. If instead it is a $U$,
the height becomes $2$; since a $D$ can only occur from height $1$ and the height
never again drops, every remaining event is a $U$. The event word may also end
while the height is $0$ or $1$. It follows that
$e=(UD)^rU^s$ for unique integers $r,s\ge0$ (with $s=0$, $s=1$, or $s\ge2$
according as the word ends at height $0$, ends at height $1$, or reaches height
$2$), and necessarily $2r+s=L$.
\end{proof}

\begin{definition}
The \emph{label} of $w\in\W_n$ is the pair $\ell(w)=(r,s)$ of
Lemma~\ref{lem:shape}.
\end{definition}

Note $r\le n$ and $s\le n$, since $r$ counts disjoint arcs and $s$ counts distinct
labels. The empty word (the unique element of $\W_0$) has label $(0,0)$, and this
is the only object with that label: for $n\ge1$ we have $L\ge1$, so $(r,s)\ne(0,0)$.
The unique element $11$ of $\W_1$ has label $(1,0)$.

\subsection{Insertions}

Every $w\in\W_{n+1}$ determines a \emph{parent} $\pi(w)\in\W_n$, obtained by
deleting both copies of the maximal label $n+1$: conditions (N) and (A) are
inherited by subwords, and the result uses each of $1,\dots,n$ twice. Conversely
the children of $w\in\W_n$ are the elements of $\W_{n+1}$ obtained by inserting two
copies of $n+1$ into $w$. It is convenient to index insertions by gaps: for
$0\le i\le j\le 2n$, let $w^{(i,j)}$ denote the word obtained from $w$ by
inserting a copy of $n+1$ immediately before position $i+1$ and a second copy
immediately before position $j+1$; the two new letters occupy final positions $i$
and $j+1$ (zero-based). Every child arises from exactly one pair $(i,j)$.

\begin{lemma}\label{lem:insert}
Let $w\in\W_n$ have label $(r,s)$ and set $L=2r+s$. Then $w^{(i,j)}\in\W_{n+1}$ if
and only if all of the following hold:
\begin{enumerate}
\item[(i)] $j\le L$;
\item[(ii)] $i\le 2r$ whenever $s\ge1$;
\item[(iii)] there is no $0\le k\le r-1$ with $i\le 2k$ and $j\ge 2k+2$;
\item[(iv)] $(i,j)\ne(2k+1,2k+1)$ for every $0\le k\le r-1$.
\end{enumerate}
\end{lemma}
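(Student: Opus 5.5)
We must check separately that $w^{(i,j)}$ is nonnesting and that it avoids $123$. The new letter $n+1$ is the largest. Write $x=w^{(i,j)}$ and let $m=n+1$. The new letters sit at positions $i$ and $j+1$ of $x$ (zero-based), and $w$ has weakly decreasing prefix $w_1\cdots w_L$ with event word $(UD)^rU^s$. So $w$ begins with $a_1a_1a_2a_2\cdots a_ra_r$, where $a_1\ge a_2\ge\cdots$ forces $a_1>a_2>\cdots>a_r$. This is followed by $s$ distinct first occurrences $b_1>b_2>\cdots>b_s$, all below $a_r$. Position $L+1$ (if it exists) satisfies $w_{L+1}>w_L$.

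\textbf{Pattern $123$.} Since $m$ is maximal, a $123$ in $x$ either lies in $w$ (impossible) or uses a copy of $m$ as its top. So $x$ avoids $123$ iff no copy of $m$ is preceded by an ascent $w_p<w_q$ with $p<q$ before it. Equivalently, the prefix of $w$ preceding each copy must be weakly decreasing. The second copy is preceded by $w_1\cdots w_j$, so the condition is exactly $j\le L$, which is (i), and it implies the same for the first copy.

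\textbf{Nonnesting, assuming (i).} A nesting in $x$ not in $w$ must involve $m$. There are three possibilities:
\begin{enumerate}
\item[(a)] $m\,b\,b\,m$: some arc $b$ of $w$ lies entirely between the two gaps.
\item[(b)] $b\,m\,m\,b$: both copies of $m$ are strictly inside an arc of $b$ with no other letter of $b$ between them.
\item[(c)] Any other configuration, which already contains one of the two.
\end{enumerate}
Since $j\le L$, only letters of the prefix $w_1\cdots w_j$ matter, together with those arcs $b_t$ that are still open. For (a), an arc completely inside $w_{i+1}\cdots w_j$ must be a closed block $a_{k+1}a_{k+1}$ at positions $2k+1,2k+2$ (one-based). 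The $b_t$ arcs close after $L\ge j$. So (a) occurs iff $i\le 2k$ and $j\ge 2k+2$ for some $k\le r-1$, which is (iii). For (b), both copies must be inserted into the gap between the two letters of some arc. For the blocks $a_{k+1}a_{k+1}$ this gap is gap $2k+1$, giving $(i,j)=(2k+1,2k+1)$ and hence (iv). For the open arcs $b_t$, the first copy of $m$ inserted after position $2r+t$ (i.e.\ $i\ge 2r+t$, $i>2r$) has $b_t$ open with its closing beyond $L\ge j$. Then $b_t\,m\,m\,b_t$ is a nesting. If $s\ge1$ and $i>2r$, take $t=1$; this forces $i\le 2r$, giving (ii). Conversely, if $i\le 2r$, the first $m$ precedes every $b_t$, so no $b_t$ can start before it, and (b) fails for the $b$'s.

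\textbf{Assembling.} Combining the two parts gives the equivalence. For the converse direction I would check, assuming (i)--(iv), that no nesting of either type exists by exhausting the possible positions of $i$ relative to the blocks. Positions inside or between $a$-blocks are excluded only by (iii) and (iv). The main obstacle is careful bookkeeping of zero-based gap indices versus one-based positions, particularly when $i=j$ or $i=2r$. I would also need to verify that the second occurrences of the $b_t$ after position $L$ interact correctly: FIFO (Lemma~\ref{lem:fifo}) is not needed there, since all that matters is that they lie beyond $j$.
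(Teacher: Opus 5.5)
Your proof is correct and follows essentially the same route as the paper: reduce (A) to $j\le L$ via the strict maximality of $n+1$, then check containment in both directions against the $r$ completed blocks (giving (iii) and (iv)) and the $s$ still-open arcs (giving (ii)), noting that arcs starting after $L$ cannot interact. Each of your cases is already stated as an ``if and only if'', so the converse you defer in your last paragraph is in fact already established. Likewise item (c) is vacuous, since any nesting involving $m$ uses both copies of $m$ and is therefore of type (a) or (b).
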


\begin{proof}
\emph{Condition (A).} Since $n+1$ is a strict maximum and a $123$ pattern requires
two strict inequalities, no copy of $n+1$ can play the role of the ``$1$'' or the
``$2$'', and two copies of $n+1$ cannot both occur in one pattern. Hence every
$123$ pattern of $w^{(i,j)}$ not already present in $w$ consists of an ascent of
$w$ followed by a copy of $n+1$. As $w$ itself avoids $123$, the word $w^{(i,j)}$
avoids $123$ if and only if no ascent of $w$ lies entirely before the \emph{second}
copy of $n+1$. The letters of $w$ preceding that copy are $w_1\cdots w_j$, so the
condition is that $w_1\cdots w_j$ be weakly decreasing, i.e.\ $j\le L$. This is (i).

\emph{Condition (N).} Since $w$ is nonnesting, we need only compare the new arc
with the arcs of $w$. Let $a$ be a label of $w$ with arc $\{p_1,p_2\}$ (positions
in $w$, one-based, so gaps $p-1$ and $p$ surround position $p$). The new arc
contains the arc of $a$ if and only if $i\le p_1-1$ and $j\ge p_2$; the arc of $a$
contains the new arc if and only if $i\ge p_1$ and $j\le p_2-1$.

By (i) we have $j\le L$, so both insertions lie inside the weakly decreasing
prefix, whose event word is $(UD)^rU^s$. An arc of $w$ with $p_1>L$ can interact
with the new arc in neither way, since containing the new arc would need
$i\ge p_1>L\ge j\ge i$, and being contained would need $j\ge p_2>L$. The
remaining arcs, those meeting $w_1\cdots w_L$, are of two kinds. First, the $r$ \emph{completed} arcs, occupying positions $2k+1,2k+2$
for $0\le k\le r-1$; in gap coordinates these give: the new arc contains the $k$th
completed arc iff $i\le 2k$ and $j\ge 2k+2$, which is (iii); and the $k$th
completed arc contains the new arc iff $i\ge 2k+1$ and $j\le 2k+1$, which combined
with $i\le j$ forces $i=j=2k+1$, which is (iv). Second, the $s$ arcs opened at
positions $2r+1,\dots,2r+s$ and closed after position $L$. Such an arc contains
the new arc iff $i\ge 2r+t$ and $j\le p_2-1$ for the relevant $t\ge1$; since
$p_2>L\ge j$ the second inequality is automatic, so the binding case is $t=1$,
giving the prohibition $i\ge 2r+1$, i.e.\ (ii). Conversely the new arc cannot
contain such an arc, as that would require $j\ge p_2>L$.
\end{proof}

\begin{lemma}\label{lem:childlabel}
Let $w\in\W_n$ have label $(r,s)$ and let $w^{(i,j)}\in\W_{n+1}$.
\begin{enumerate}
\item If $i\ge1$ then $\ell(w^{(i,j)})=(k,0)$ if $i=2k$, and $\ell(w^{(i,j)})=(k,1)$
  if $i=2k+1$.
\item If $i=j=0$ then $\ell(w^{(i,j)})=(r+1,s)$.
\item If $i=0$ and $j\ge1$ then $\ell(w^{(i,j)})=(0,j+1)$.
\end{enumerate}
\end{lemma}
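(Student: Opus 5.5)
The plan is to compute directly the longest weakly decreasing prefix of $w'=w^{(i,j)}$ and its event word. Lemma~\ref{lem:shape} then tells us that this event word has the form $(UD)^{r'}U^{s'}$, so it suffices to identify $L'$ and the event pattern. By Lemma~\ref{lem:insert}(i) we have $j\le L$, so both copies of $n+1$ sit inside (or at the right end of) the decreasing prefix $w_1\cdots w_L$, whose event word is $(UD)^rU^s$. Since $n+1$ is a strict maximum, the first copy of $n+1$ at final position $i$ (zero-based) causes an ascent $w_i<n+1$ whenever $i\ge1$. This is why we split into cases according to whether $i=0$.

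\emph{Case $i\ge1$.} Here $w'_1\cdots w'_i=w_1\cdots w_i$, followed by $n+1>w_i$. So the longest weakly decreasing prefix of $w'$ is exactly $w_1\cdots w_i$, and $L'=i$. I must check that the event word is still computed correctly: first versus second occurrence is unchanged for letters of $w$, because the deleted-in-prefix letters are not copies of $n+1$. Hence the event word is the length-$i$ prefix of $(UD)^rU^s$. By Lemma~\ref{lem:insert}(ii), if $s\ge1$ then $i\le 2r$, so this prefix lies within $(UD)^r$ and equals $(UD)^k$ or $(UD)^kU$ according as $i=2k$ or $2k+1$. If $s=0$, then $i\le j\le L=2r$ and the same conclusion holds. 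This gives labels $(k,0)$ and $(k,1)$.

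\emph{Case $i=j=0$.} Now $w'=(n+1)(n+1)w$. The prefix $(n+1)(n+1)w_1\cdots w_L$ is weakly decreasing, and it stops exactly where $w$'s prefix stops. So $L'=L+2$ and the event word is $UD(UD)^rU^s$, giving $(r+1,s)$.

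\emph{Case $i=0<j$.} Now $w'=(n+1)w_1\cdots w_j(n+1)w_{j+1}\cdots$. The prefix $(n+1)w_1\cdots w_j$ is weakly decreasing. With $j\ge1$ the next letter $n+1$ exceeds $w_j$, so $L'=j+1$. Its event word is $U$ followed by the first $j$ events of $w$, since the second copy of $n+1$ lies outside this prefix. The claim $(0,j+1)$ requires this word to be $U^{j+1}$, i.e.\ the first $j$ events of $w$ are all $U$. Proving this is the main obstacle, and I would argue it from Lemma~\ref{lem:insert}. Condition (iii) with $k=0$ (available when $r\ge1$) forbids $i=0$ with $j\ge2$. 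Condition (iv) with $k=0$ forbids $j=1$ only when $i=1$, so the case $r\ge1$, $j=1$ needs care: there the word begins $(n+1)\,w_1\,(n+1)$, and (iii) with $k=0$ requires $j\ge2$ to be excluded, so $j=1$ is allowed, and then the event word of $w'_1w'_2$ is $UU$ anyway, because $w_1$ is a first occurrence. Thus in all cases the first $\min(j,1)$ events are $U$ when $r\ge1$. When $r=0$, the prefix event word of $w$ is $U^s$ with $j\le L=s$, so the first $j$ events are $U$. Combining these gives event word $U^{j+1}$ and label $(0,j+1)$.
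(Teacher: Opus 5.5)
Your proof is correct and follows the paper's argument essentially step for step. In each case you identify the longest weakly decreasing prefix of $w^{(i,j)}$ directly and read off its event word as a truncation of $(UD)^rU^s$, prefixed by $UD$ or $U$ where needed; in the case $i=0<j$ you use Lemma~\ref{lem:insert}(iii) with $k=0$ to force $j\le 1$ when $r\ge1$, exactly as the paper does. Your detour through condition (iv) in that last case is unnecessary, since $w_1$ is always a first occurrence, but it does no harm.
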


\begin{proof}
Write $m=n+1$ for the new maximum and $w'=w^{(i,j)}$.

(1) The first $i$ letters of $w'$ are $w_1\cdots w_i$, which is weakly decreasing
because $i\le 2r\le L$ by Lemma~\ref{lem:insert}(ii) and (iii) (if $s=0$ then
$i\le j\le L=2r$). The next letter of $w'$ is $m>w_i$, so the longest weakly
decreasing prefix of $w'$ is exactly $w_1\cdots w_i$. Its event word is the length
$i$ prefix of $(UD)^rU^s$, since a letter of $w_1\cdots w_i$ is a first occurrence
in $w'$ precisely when it is one in $w$. Truncating $(UD)^rU^s$ to $2k$ letters
gives $(UD)^k$, and to $2k+1$ letters gives $(UD)^kU$; note $2k+1\le 2r$ forces
$k\le r-1$, so the truncation does indeed lie in the $(UD)^r$ portion.

(2) Here $w'=mm\,w_1w_2\cdots$. Since $m$ is the maximum, $m\ge m\ge w_1$ and the
weakly decreasing prefix of $w'$ is $mm$ followed by the weakly decreasing prefix
of $w$, of length $L+2$. Its event word is $UD$ followed by $(UD)^rU^s$, namely
$(UD)^{r+1}U^s$.

(3) Here $w'=m\,w_1\cdots w_j\,m\cdots$. As $j\le L$ the letters $w_1\cdots w_j$
are weakly decreasing and bounded above by $m$; the letter following them is $m$,
and $m>w_j$, so the longest weakly decreasing prefix of $w'$ is $m\,w_1\cdots w_j$,
of length $j+1$. By Lemma~\ref{lem:insert}(iii) with $k=0$, if $r\ge1$ then
$j\le1$; and if $r=0$ then the event word of $w_1\cdots w_j$ is $U^j$. In either
case the event word of the prefix of $w'$ is $U$ followed by the length $j$ prefix
of $(UD)^rU^s$, which for $j\le1$ or $r=0$ equals $U^{j+1}$. Hence
$\ell(w')=(0,j+1)$.
\end{proof}

\subsection{The succession rule}

\begin{theorem}\label{thm:succ}
The map $\pi$ makes $\bigcup_{n\ge0}\W_n$ into a generating tree rooted at the
empty word, in which the multiset of labels of the children of an object depends
only on its own label, according to the following rule. Exponents denote
multiplicities.
\begin{align*}
(0,0) &\longrightarrow (1,0),\\
(0,s) &\longrightarrow (1,s),\,(0,2),\,(0,3),\dots,(0,s+1) && (s\ge1),\\
(r,s) &\longrightarrow (r+1,s),\,(0,2),\,
   \bigl[(k,1)^2,(k+1,0)^2\bigr]_{k=0}^{r-2},\,
   (r-1,1)^{s+1},\,(r,0)^{s+1} && (r\ge1,\ s\ge0).
\end{align*}
In the last line the bracketed range is empty when $r=1$.
\end{theorem}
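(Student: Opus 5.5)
The plan is to combine Lemma~\ref{lem:insert}, which lists the admissible gap pairs $(i,j)$, with Lemma~\ref{lem:childlabel}, which gives the label of each child. Since $\pi$ deletes the two copies of the maximum and every child arises from exactly one pair $(i,j)$, the tree structure is immediate. It then suffices to enumerate the admissible pairs for a label $(r,s)$, sort them by their value of $i$, and read off the labels. Because both lemmas refer only to $r$, $s$ and $L=2r+s$, the multiset of child labels depends only on the label, which is the main claim. What remains is to check that the counts match the displayed rule.

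For $r=0$ (so $L=s$), conditions (iii) and (iv) are vacuous. If $s\ge1$, condition (ii) forces $i=0$, and (i) gives $0\le j\le s$.
\begin{itemize}
\item The pair $j=0$ gives $(1,s)$ by Lemma~\ref{lem:childlabel}(2).
\item The pairs $j=1,\dots,s$ give $(0,2),\dots,(0,s+1)$ by Lemma~\ref{lem:childlabel}(3).
\end{itemize}
For $(0,0)$ only $(i,j)=(0,0)$ is admissible, giving $(1,0)$.

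For $r\ge1$, condition (ii) together with $i\le j\le L$ and condition (iii) for $s=0$ gives $i\le 2r$ in all cases. Fix $i$ and count the admissible $j$.
\begin{itemize}
\item For $i=0$: condition (iii) with $k=0$ forces $j\le1$. So $j=0$ gives $(r+1,s)$ and $j=1$ gives $(0,2)$. The case $(i,j)=(0,1)$ is not excluded by (iv), which concerns only $i=j$ odd.
\item For $i=2k$ with $1\le k\le r-1$: condition (iii) forces $j\le 2k+1$, and $j\ge i$. This gives the two values $j\in\{2k,2k+1\}$, hence $(k,0)^2$.
\item For $i=2k+1$ with $0\le k\le r-2$: condition (iii) with index $k+1$ forces $j\le 2k+3$, and (iv) excludes $j=2k+1$. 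This leaves $j\in\{2k+2,2k+3\}$, hence $(k,1)^2$.
\item For $i=2r-1$: the last constraint from (iii) would need index $r$, which does not exist, so $j$ ranges over $2r,\dots,L$ after (iv) removes $j=2r-1$. That is $s+1$ values, giving $(r-1,1)^{s+1}$.
\item For $i=2r$: $j$ ranges over $2r,\dots,2r+s$, again $s+1$ values, giving $(r,0)^{s+1}$. This case is allowed by (ii) and by (iii), since (iii) requires $i\le 2k\le 2r-2$.
\end{itemize}
Reindexing the pairs $(k,0)^2$ for $1\le k\le r-1$ as $(k+1,0)^2$ for $0\le k\le r-2$ matches the bracket in the rule, and it is empty when $r=1$.

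The main obstacle is bookkeeping at the boundary $i\in\{2r-1,2r\}$. There we must confirm that no completed arc constrains $j$, while (ii) still caps $i$ when $s\ge1$. We must also handle the degenerate cases $s=0$, where $L=2r$ and the counts $s+1$ become $1$, and $r=1$. I would verify these directly against Lemma~\ref{lem:insert}, and check small cases such as $11\mapsto$ the four children of $\W_2$ as a sanity test.
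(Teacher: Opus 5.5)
Your proposal is correct and follows the paper's proof essentially step for step. Both combine Lemma~\ref{lem:insert} with Lemma~\ref{lem:childlabel}, bound $i\le 2r$ when $r\ge1$, and split into the cases $i=0$, $i=2k$ ($1\le k\le r-1$), $i=2k+1$ ($0\le k\le r-2$), $i=2r-1$ and $i=2r$, obtaining the same admissible values of $j$ and hence the same child labels. (For $s=0$ the bound $i\le 2r$ already follows from $i\le j\le L=2r$, so appealing to (iii) there is unnecessary but harmless.)
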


\begin{proof}
That $\pi$ is well defined and that each $w\in\W_{n+1}$ has the unique parent
$\pi(w)$ was observed above, so it remains to enumerate the pairs $(i,j)$ allowed
by Lemma~\ref{lem:insert} and apply Lemma~\ref{lem:childlabel}.

Suppose first $r=0$. If also $s=0$ then $L=0$ and $(i,j)=(0,0)$ is the only
option, giving the child $(1,0)$ by Lemma~\ref{lem:childlabel}(2). If $s\ge1$ then
(ii) forces $i=0$, conditions (iii) and (iv) are vacuous, and (i) allows
$0\le j\le s$. The choice $j=0$ gives $(1,s)$ and the choices $j=1,\dots,s$ give
$(0,2),\dots,(0,s+1)$.

Now suppose $r\ge1$. If $s\ge1$ then (ii) gives $i\le 2r$; if $s=0$ then
$i\le j\le L=2r$; so in all cases $0\le i\le 2r$. We enumerate by $i$.

\emph{Case $i=0$.} Condition (iii) with $k=0$ forbids $j\ge2$, so $j\in\{0,1\}$,
giving the children $(r+1,s)$ and $(0,2)$.

\emph{Case $i=2k$ with $1\le k\le r-1$.} The smallest $k'$ with $i\le 2k'$ is
$k'=k$, so (iii) gives $j\le 2k+1$; conditions (i) and (iv) impose nothing further,
and $j\ge i$. Hence $j\in\{2k,2k+1\}$, two children, each of label $(k,0)$.

\emph{Case $i=2r$.} There is no completed arc with index $\ge r$, so (iii) is
vacuous, and (i) gives $2r\le j\le 2r+s$: exactly $s+1$ children, each of label
$(r,0)$.

\emph{Case $i=2k+1$ with $0\le k\le r-2$.} Condition (iv) excludes $j=2k+1$; the
smallest $k'$ with $i\le 2k'$ is $k'=k+1$, so (iii) gives $j\le 2k+3$. Hence
$j\in\{2k+2,2k+3\}$, two children, each of label $(k,1)$.

\emph{Case $i=2r-1$.} Condition (iv) excludes $j=2r-1$, there is no completed arc
with index $\ge r$, and (i) gives $j\le 2r+s$. Hence $2r\le j\le 2r+s$: exactly
$s+1$ children, each of label $(r-1,1)$.

Collecting: $(r+1,s)$ and $(0,2)$ once each; $(k,0)$ twice for $1\le k\le r-1$;
$(k,1)$ twice for $0\le k\le r-2$; $(r,0)$ and $(r-1,1)$ with multiplicity $s+1$
each. Reindexing $(k,0)$ as $(k+1,0)$ for $0\le k\le r-2$ gives the stated rule.
\end{proof}

\section{Functional equations}\label{sec:eqs}

For $n\ge1$ let $f_{n,r,s}$ be the number of $w\in\W_n$ with $\ell(w)=(r,s)$, and
put
\[
  A(v)=\sum_{n\ge1}\sum_{s\ge1} f_{n,0,s}\,z^nv^s,\qquad
  B(u,v)=\sum_{n\ge1}\sum_{r\ge1}\sum_{s\ge0} f_{n,r,s}\,z^nu^rv^s,
\]
\[
  P=A(1),\qquad Q=B(1,1),\qquad G(u)=B(u,0),\qquad \alpha=[v]A(v),
\]
so that $C=1+P+Q$. Since $r\le n$ and $s\le n$, the coefficient of $z^n$ in $A$ and
in $B$ is a polynomial in $v$, resp.\ in $(u,v)$, of degree at most $n$ in each
variable; consequently $A$ and $B$ may be evaluated at any formal power series
argument, and all substitutions below take place in $\Q[[z]]$.

Before deriving the equations we illustrate the translation mechanism on a single
parent. An object of size $n$ with label $(r,s)$, $r\ge1$, contributes the
monomial $z^nu^rv^s$ to $B(u,v)$. By Theorem~\ref{thm:succ} it produces at size
$n+1$, among others, one child of label $(r+1,s)$ and one child of label
$(0,2)$; these children contribute $z^{n+1}u^{r+1}v^{s}=(zu)\cdot z^nu^rv^s$
and $z^{n+1}v^2$ respectively. Summing over all parents, the first transition
family contributes the term $zu\,B(u,v)$ to the equation for $B$, and the second
contributes $zv^2\,B(1,1)=zv^2Q$ to the equation for $A$. Every term in
equations \eqref{eq:1}, \eqref{eq:4} and \eqref{eq:5} below arises in exactly
this way; transitions with multiplicity $s+1$ produce, after summation, the
terms involving $\partial_vB(u,1)$.

\begin{proposition}\label{prop:A}
\begin{equation}\label{eq:1}
  A(v) \;=\; \alpha v \;+\; \frac{zv^2}{1-v}\bigl(P-A(v)\bigr) \;+\; zv^2 Q .
\end{equation}
\end{proposition}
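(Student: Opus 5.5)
We read off from Theorem~\ref{thm:succ} every transition whose target label has the form $(0,s')$ with $s'\ge1$, and sum the resulting monomials over all parents; this should reproduce each term of \eqref{eq:1}. Such targets arise in only three ways. First, a parent $(0,s)$ with $s\ge1$ produces children $(0,2),\dots,(0,s+1)$. Second, every parent $(r,s)$ with $r\ge1$ produces one child $(0,2)$. Third, a parent $(r,s)$ with $r\ge1$ produces children $(r-1,1)^{s+1}$ (and $(k,1)^2$ for $k\le r-2$), but these have $r'=0$ only when $r=1$ or $k=0$.

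This last family lands in labels $(0,1)$, and $s'=1$ is exactly the coefficient we have named $\alpha=[v]A(v)$. So the plan is to leave the $v^1$ part unexpanded as $\alpha v$. Doing so sidesteps the messier expression for $\alpha$ in terms of $\partial_vB$ and $G$, which presumably appears in a later equation. The one thing to check is that no other transition produces label $(0,1)$. From the rule, the labels $(0,s')$ produced are $(0,2),\dots,(0,s+1)$ and $(0,2)$, all with $s'\ge2$, together with $(0,1)$ from the $k=0$ and $r-1=0$ terms. Apart from the $v^1$ term, every contribution therefore has $s'\ge2$.

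For the coefficients of $v^{s'}$ with $s'\ge2$:

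\begin{itemize}
\item[(a)] A parent of size $n$ with label $(0,s)$ contributes
\[
z^{n+1}(v^2+\cdots+v^{s+1}) = z^{n+1}\,\frac{v^2(1-v^s)}{1-v}.
\]
Summing over all parents with $r=0$, $s\ge1$ (that is, over all monomials of $A$) gives
\[
\frac{zv^2}{1-v}\bigl(A(1)-A(v)\bigr)=\frac{zv^2}{1-v}\bigl(P-A(v)\bigr).
\]
\item[(b)] The parents with $r\ge1$ each contribute $z^{n+1}v^2$, for a total of $zv^2B(1,1)=zv^2Q$.
\item[(c)] The root $(0,0)$ gives only $(1,0)$, which has $r=1$ and so contributes nothing to $A$.
\end{itemize}

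Adding $\alpha v$ to (a) and (b) gives \eqref{eq:1}.

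The main care point is the formal legitimacy of the division by $1-v$. The quotient $(P-A(v))/(1-v)$ is a genuine series: each coefficient of $z^n$ in $A(1)-A(v)$ is a polynomial in $v$ vanishing at $v=1$, so the division is exact polynomial division. This also justifies the geometric-sum identity used in (a).

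We also need to confirm the indexing: children of size-$n$ objects have size $n+1$, which gives the factor $z$, and every object of size $n+1\ge2$ with $r=0$ arises exactly once. The size-$1$ objects are not a concern, because $\W_1$ contains only $11$, with label $(1,0)$, so no object of size $1$ contributes to $A$.
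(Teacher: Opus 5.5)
Your proposal is correct and follows the paper's proof essentially verbatim. You isolate the two transition families of Theorem~\ref{thm:succ} that produce labels $(0,t)$ with $t\ge2$, sum them to obtain $\frac{zv^2}{1-v}\bigl(P-A(v)\bigr)$ and $zv^2Q$, and account for the $(0,1)$ children through the coefficient $\alpha v$, with your remarks on exact division by $1-v$ and on size-$1$ objects only making explicit what the paper leaves implicit.
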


\begin{proof}
By Theorem~\ref{thm:succ} an object of label $(0,s')$ with $s'\ge1$ contributes one
child of each label $(0,t)$ for $2\le t\le s'+1$, an object of label $(r,s)$ with
$r\ge1$ contributes one child of label $(0,2)$, and no other object contributes a
child whose first label coordinate is $0$ and whose second exceeds $1$. Now
\[
\frac{zv^2}{1-v}\bigl(P-A(v)\bigr)
=\sum_{n\ge1}\sum_{s'\ge1}f_{n,0,s'}z^{n+1}\frac{v^2(1-v^{s'})}{1-v}
=\sum_{n\ge1}\sum_{s'\ge1}f_{n,0,s'}z^{n+1}\bigl(v^2+\cdots+v^{s'+1}\bigr),
\]
which is exactly the first family, and $zv^2Q$ is exactly the second. The
remaining part of $A(v)$ is its coefficient of $v^1$, which is $\alpha v$ by
definition.
\end{proof}

\begin{proposition}\label{prop:ids}
For all admissible indices,
\begin{align}
  f_{n,1,0} &= [n=1] + f_{n,0,1}, \label{eq:id1}\\
  f_{n,r,1} &= f_{n,r+1,0} - [n=r+1] \qquad (r\ge1), \label{eq:id2}\\
  f_{n,r,s} &= f_{n-r,0,s} \qquad (r\ge1,\ s\ge2). \label{eq:id3}
\end{align}
\end{proposition}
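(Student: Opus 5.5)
The three identities follow from the succession rule of Theorem~\ref{thm:succ}. For each target label we list which parent labels produce it and with what multiplicity, and compare the counts. Throughout we use that $f_{n,r,s}$ counts nodes at depth $n$ of the generating tree. Each node at depth $n\ge1$ has a unique parent at depth $n-1$, and the root $(0,0)$ sits at depth $0$.

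\emph{Identity \eqref{eq:id3}.} Fix $s\ge2$ and $r\ge1$. Scanning the rule, the only productions yielding a label $(r,s)$ with $s\ge2$ are the following.
\begin{itemize}
\item $(r-1,s)\to(r,s)$, for $r-1\ge1$.
\item $(0,s)\to(1,s)$.
\item $(\cdot,\cdot)\to(0,t)$; these do not matter here, since the first coordinate is $r\ge1$.
\end{itemize}
The multiplicity families $(r-1,1)^{s+1}$, $(r,0)^{s+1}$ and the bracketed family only produce second coordinate $0$ or $1$. Hence every node with label $(r,s)$, $r\ge1$, $s\ge2$, has a parent with label $(r-1,s)$. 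The map "parent" is a bijection from such nodes at depth $n$ onto nodes with label $(r-1,s)$ at depth $n-1$, because each such parent has exactly one child of that label. Iterating $r$ times gives $f_{n,r,s}=f_{n-r,0,s}$.

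\emph{Identities \eqref{eq:id1} and \eqref{eq:id2}.} These compare two labels whose production rules match term by term. For the label $(r+1,0)$ with $r\ge0$, the contributions are:
\begin{itemize}
\item $(r,0)\to(r+1,0)$ via the shift $(r,s)\to(r+1,s)$ when $r\ge1$, or via $(0,0)\to(1,0)$ when $r=0$;
\item the bracketed term $(k+1,0)^2$ with $k=r$, from each parent $(r',s')$ with $r'\ge r+2$, with multiplicity $2$;
\item $(r',0)^{s'+1}$ with $r'=r+1$, from parents $(r+1,s')$.
\end{itemize}
For the label $(r,1)$ with $r\ge0$, the contributions are:
\begin{itemize}
\item the shift $(r-1,1)\to(r,1)$ when $r\ge1$; when $r=0$ there is the shift-type source $(0,1)$ only through the $r=0$ rule, which produces $(1,1)$, not $(0,1)$, so for $r=0$ nothing arises here;
\item the bracketed $(k,1)^2$ with $k=r$, from parents with $r'\ge r+2$, multiplicity $2$;
\item $(r'-1,1)^{s'+1}$ with $r'=r+1$.
\end{itemize}
The last two families coincide exactly for $(r+1,0)$ and $(r,1)$. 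Both also contain the first, shift, family, which makes a difference. So the cleanest route is a joint induction on $n$ for the statement $f_{n,r+1,0}-f_{n,r,1}=[n=r+1]$ for all $r\ge0$. Note that \eqref{eq:id1} is the $r=0$ case, since $f_{n,0,1}$ counts $(0,1)$, and nothing ever produces $(0,1)$ except at $n=1$? We must check this: $(0,1)$ is not produced by any rule, so $f_{n,0,1}=0$ for all $n$, and \eqref{eq:id1} reduces to $f_{n,1,0}=[n=1]$, itself needing proof. Subtracting the two recurrences, the common families cancel and we get
\[
f_{n,r+1,0}-f_{n,r,1}=f_{n-1,r,0}-f_{n-1,r-1,1}
\]
for $r\ge1$. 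For $r=0$ we get $f_{n,1,0}-f_{n,0,1}=[n-1=0]$ from the root rule $(0,0)\to(1,0)$, since $(0,0)$ occurs only at depth $0$. The induction then telescopes: $f_{n,r+1,0}-f_{n,r,1}=[n-r=1]$, which is \eqref{eq:id2} after combining with the $r=0$ base case. That base case is exactly \eqref{eq:id1}, with $f_{n,0,1}$ kept on the right as stated.

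The main obstacle is the bookkeeping: checking that no other production hits $(r+1,0)$ or $(r,1)$. In particular the $r=0$ rule $(0,s)\to(0,t)$ has $t\ge2$ and so hits neither. Also, the boundary case $r'=r+1$ with $r=0$ in the bracket range must be treated correctly; the range is empty when $r'=1$, which is consistent with $r'\ge r+2$.
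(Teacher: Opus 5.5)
Your argument is correct and is essentially the paper's proof. For \eqref{eq:id3} you note that a label $(r,s)$ with $r\ge1$, $s\ge2$ arises only from the shift $(r-1,s)\to(r,s)$. For \eqref{eq:id1}--\eqref{eq:id2} you match the bracketed family and the multiplicity-$(S+1)$ family between the targets $(r+1,0)$ and $(r,1)$, so that only the shift and root contributions survive. This gives $f_{n,r+1,0}-f_{n,r,1}=f_{n-1,r,0}-f_{n-1,r-1,1}$ for $r\ge1$ and $f_{n,1,0}-f_{n,0,1}=[n=1]$, and the induction on $n$ then telescopes. Treating \eqref{eq:id1} as the $r=0$ member of the same family is the same content as the paper's separate argument followed by its use at $r=1$.

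You should, however, delete the aside claiming that $(0,1)$ is never produced, so that $f_{n,0,1}=0$ and \eqref{eq:id1} reduces to $f_{n,1,0}=[n=1]$. That claim is false. It also contradicts your own lists, which at $r=0$ show $(0,1)$ arising from the transition $(1,S)\to(0,1)^{S+1}$ and from the bracketed $(0,1)^2$ for parents with $R\ge2$. Concretely, in $\W_2$ the word $1212$ has label $(0,1)$ and $1122$ has label $(1,0)$, so $f_{2,0,1}=f_{2,1,0}=1$. Your subsequent derivation keeps $f_{n,0,1}$ on the right-hand side and never uses the aside. Striking it leaves a complete proof.
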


\begin{proof}
\eqref{eq:id3}: by Theorem~\ref{thm:succ}, a child of label $(r,s)$ with $r\ge1$
and $s\ge2$ arises only from the transition $(r-1,s)\to(r,s)$, since every other
child listed has second coordinate $0$, $1$, or $2$ with first coordinate $0$.
Hence $f_{n,r,s}=f_{n-1,r-1,s}$, and iterating $r$ times gives \eqref{eq:id3}.

\eqref{eq:id1}: children of label $(1,0)$ arise from $(0,0)\to(1,0)$ (only at
$n=1$), from $(k+1,0)^2$ with $k=0$, which requires a parent $(R,S)$ with $R\ge2$,
and from $(R,0)^{S+1}$ with $R=1$. Children of label $(0,1)$ arise from $(k,1)^2$
with $k=0$, again requiring $R\ge2$, and from $(R-1,1)^{S+1}$ with $R=1$. The
transition $(R+1,S)$ contributes to $(1,0)$ only if $R=0$, i.e.\ from the root,
already counted. The two counts therefore agree apart from the root contribution.

\eqref{eq:id2}: fix $r\ge1$ and compare, for a single parent of label $(R,S)$, the
number of children labelled $(r+1,0)$ with the number labelled $(r,1)$. If $R\ge1$,
Theorem~\ref{thm:succ} gives
\begin{align*}
  \#(r+1,0) &= 2[r\le R-2]+(S+1)[r=R-1]+[S=0][R=r],\\
  \#(r,1)   &= 2[r\le R-2]+(S+1)[r=R-1]+[S=1][R=r-1],
\end{align*}
so that
\[
  \#(r+1,0)-\#(r,1)=[S=0][R=r]-[S=1][R=r-1];
\]
here the terms $[S=0][R=r]$ and
$[S=1][R=r-1]$ come from the transition $(R,S)\to(R+1,S)$. If $R=0$ and $S\ge1$
the only child is $(1,S)$ together with children of the form $(0,t)$, so
$\#(r+1,0)-\#(r,1)=-[S=1][r=1]$; and the root contributes $0$ for $r\ge1$.
Summing over the objects of size $n-1$,
\[
  f_{n,r+1,0}-f_{n,r,1} = f_{n-1,r,0}-f_{n-1,r-1,1},
\]
valid for all $r\ge1$ (for $r=1$ the term $f_{n-1,0,1}$ arises from the $R=0$
parents). We induct on $n$. For $n=1$ both sides of \eqref{eq:id2} vanish for every
$r\ge1$, since $f_{1,r,s}=[r=1][s=0]$. For $n\ge2$ and $r\ge2$, the inductive
hypothesis at $(n-1,r-1)$ gives $f_{n-1,r-1,1}=f_{n-1,r,0}-[n-1=r]$, whence
$f_{n,r+1,0}-f_{n,r,1}=[n=r+1]$ as required. For $r=1$ we use \eqref{eq:id1}
instead: $f_{n-1,1,0}-f_{n-1,0,1}=[n-1=1]=[n=2]$, giving the claim.
\end{proof}

\begin{proposition}\label{prop:B}
\begin{equation}\label{eq:4}
  B(u,v) \;=\; G(u) \;+\; v\Bigl(\frac{G(u)}{u}-(z+\alpha)\Bigr)
  \;+\; \frac{zu}{1-zu}\bigl(A(v)-(z+\alpha)v\bigr).
\end{equation}
\end{proposition}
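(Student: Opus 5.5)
We prove \eqref{eq:4} by splitting $B(u,v)$ according to the second label coordinate, $B(u,v)=B_0(u)+vB_1(u)+B_{\ge2}(u,v)$. Here $B_0=G(u)$ collects $s=0$, $B_1(u)=\sum_{n}\sum_{r\ge1}f_{n,r,1}z^nu^r$, and $B_{\ge2}$ collects $s\ge2$. Each part is then rewritten using the identities of Proposition~\ref{prop:ids}. No new combinatorics is needed beyond those identities and the definitions of $A$, $G$, and $\alpha$.

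\emph{The $s\ge2$ part.} By \eqref{eq:id3}, $f_{n,r,s}=f_{n-r,0,s}$ for $r\ge1$ and $s\ge2$. Hence
\[
B_{\ge2}(u,v)=\sum_{r\ge1}(zu)^r\sum_{m}\sum_{s\ge2}f_{m,0,s}z^mv^s=\frac{zu}{1-zu}\bigl(A(v)-v\,[v]A(v)\bigr).
\]
Here $m=n-r\ge1$ automatically, since $f_{m,0,s}=0$ unless $m\ge1$. We have $[v]A(v)=\alpha$ by definition, so this equals $\frac{zu}{1-zu}(A(v)-\alpha v)$.

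\emph{The $s=1$ part.} By \eqref{eq:id2},
\[
B_1(u)=\sum_{r\ge1}\sum_n\bigl(f_{n,r+1,0}-[n=r+1]\bigr)z^nu^r=\frac{1}{u}\Bigl(G(u)-\sum_n f_{n,1,0}z^nu\Bigr)-\frac{1}{u}\sum_{r\ge1}(zu)^{r+1}.
\]
By \eqref{eq:id1}, $\sum_n f_{n,1,0}z^n=z+\alpha$. Therefore
\[
B_1(u)=\frac{G(u)}{u}-(z+\alpha)-\frac{z^2u}{1-zu}.
\]

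\emph{Reconciling with \eqref{eq:4}.} Adding the three parts, the coefficient of $v$ picks up the term $-\frac{z^2uv}{1-zu}$, which is exactly $\frac{zu}{1-zu}\cdot(-zv)$. Absorbing it into the $s\ge2$ term gives $\frac{zu}{1-zu}\bigl(A(v)-(z+\alpha)v\bigr)$, and the formula \eqref{eq:4} follows.

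The only delicate points are bookkeeping. The first is the boundary term $[n=r+1]$, which produces the extra $z^2u/(1-zu)$. The second is checking that the summation ranges in \eqref{eq:id3} include no spurious $m=0$ terms. I expect the handling of $[n=r+1]$ to be the main place to double-check signs, for instance against the small cases $n=1,2$.
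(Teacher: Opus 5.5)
Your proposal is correct and follows essentially the same route as the paper. Both arguments split $B(u,v)$ by the second coordinate into $s=0$, $s=1$ and $s\ge2$, identify the $v^1$ coefficient via \eqref{eq:id1} and \eqref{eq:id2} (with the boundary term $[n=r+1]$ producing $-z^2u/(1-zu)$), and handle $s\ge2$ via \eqref{eq:id3}; the only difference is that the paper extracts coefficients of the right-hand side and matches them to $B$, while you decompose $B$ and assemble the right-hand side.
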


\begin{proof}
We compare coefficients of powers of $v$. The coefficient of $v^0$ on the right is
$G(u)$, since $A(v)$ has no constant term in $v$; this matches $B(u,0)=G(u)$.

For $v^1$: since $G(u)=\sum_{n\ge1}\sum_{r\ge1}f_{n,r,0}z^nu^r$ we have
$G(u)/u=\sum_{n,r\ge1}f_{n,r,0}z^nu^{r-1}$, whose $u^0$ term is
$\sum_n f_{n,1,0}z^n=z+\alpha$ by \eqref{eq:id1}. Hence
$G(u)/u-(z+\alpha)=\sum_{n\ge1}\sum_{r\ge1}f_{n,r+1,0}z^nu^r$. The coefficient of
$v^1$ in $A(v)-(z+\alpha)v$ is $\alpha-(z+\alpha)=-z$, so the last term of
\eqref{eq:4} contributes $-z^2u/(1-zu)=-\sum_{r\ge1}z^{r+1}u^r$ to the coefficient
of $v^1$. Adding,
\[
  [v^1]\text{RHS}=\sum_{n\ge1}\sum_{r\ge1}\bigl(f_{n,r+1,0}-[n=r+1]\bigr)z^nu^r
  =\sum_{n\ge1}\sum_{r\ge1}f_{n,r,1}z^nu^r=[v^1]B(u,v)
\]
by \eqref{eq:id2}.

For $v^s$ with $s\ge2$: only the last term contributes, giving
$\bigl(\sum_{r\ge1}z^ru^r\bigr)\bigl(\sum_{m\ge1}f_{m,0,s}z^m\bigr)
=\sum_{r\ge1}\sum_{n>r}f_{n-r,0,s}z^nu^r$, which equals
$\sum_{r\ge1}\sum_n f_{n,r,s}z^nu^r$ by \eqref{eq:id3}.
\end{proof}

\begin{proposition}\label{prop:G}
\begin{equation}\label{eq:5}
  G(u) \;=\; zu \;+\; zu\,G(u) \;+\; \frac{2z}{1-u}\bigl(uQ-B(u,1)\bigr)
  \;+\; z\bigl(B(u,1)+\partial_vB(u,1)\bigr).
\end{equation}
\end{proposition}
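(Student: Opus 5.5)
We prove \eqref{eq:5} by computing $G(u)=B(u,0)=\sum_{n\ge1}\sum_{r\ge1}f_{n,r,0}z^nu^r$ directly from Theorem~\ref{thm:succ}. We list every transition that produces a child of label $(r,0)$ with $r\ge1$, then sum the contributions over all parents, weighting a parent of size $n$ by $z^{n+1}$ times the relevant power of $u$. There are four sources, matching the four terms on the right of \eqref{eq:5}.
\begin{itemize}
\item The root transition $(0,0)\to(1,0)$ gives the term $zu$.
\item The transition $(r,s)\to(r+1,s)$ with $s=0$ and $r\ge1$ gives $zu\,G(u)$. The same transition from parents $(0,s)$ with $s\ge1$ yields $(1,s)$ with $s\ge1$, so it never has second coordinate $0$ and contributes nothing.
\item The family $(k+1,0)^2$ for $0\le k\le r-2$ comes from a parent $(R,S)$ with $R\ge2$. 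It contributes $2z\sum_{m=1}^{R-1}u^m$, independent of $S$.
\item The family $(R,0)^{S+1}$ comes from a parent $(R,S)$ with $R\ge1$. It contributes $z(S+1)u^R$.
\end{itemize}
Parents $(0,s)$ with $s\ge1$ produce no other children with $r\ge1$ and second coordinate $0$. The children $(0,2)$ and $(k,1)$ are likewise irrelevant here.

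We convert the last two families into generating functions. For the third family we use
\[
2z\sum_{m=1}^{R-1}u^m=\frac{2z(u-u^R)}{1-u}.
\]
This formula is valid for $R=1$ as well, where it vanishes. Summing over all parents with $R\ge1$, weighted by $z^n$ (and evaluating $v=1$, since $S$ plays no role), gives
\[
\frac{2z}{1-u}\bigl(uB(1,1)-B(u,1)\bigr)=\frac{2z}{1-u}\bigl(uQ-B(u,1)\bigr).
\]
For the fourth family, the sum over parents of $z^{n+1}(S+1)u^R$ is $z\bigl(B(u,1)+\partial_vB(u,1)\bigr)$. Here $\partial_v$ brings down the exponent $S$, and we then evaluate at $v=1$.

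Adding the four contributions yields \eqref{eq:5}. The only care needed is making sure no source is double-counted or missed, especially in two places:
\begin{itemize}
\item At $r=1$, the root child $(1,0)$ should be counted only through the root term, and the $(r+1,s)$ transition from $(0,s)$ parents cannot land at $s=0$.
\item The evaluations at $v=1$ and the derivative $\partial_vB(u,1)$ must make sense as formal power series. This holds because each $z^n$ coefficient of $B$ is a polynomial, as noted at the start of Section~\ref{sec:eqs}.
\end{itemize}
Finally, $(1-u)$ divides $uQ-B(u,1)$ coefficientwise, so the middle term is a genuine power series in $z$ with polynomial coefficients in $u$. I expect this bookkeeping check, rather than any algebra, to be the main point requiring attention.
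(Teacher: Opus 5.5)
Your proposal is correct and is essentially the paper's own proof. You isolate the same four sources of children with label $(r,0)$, $r\ge1$ (the root, the shift $(R,0)\to(R+1,0)$, the doubled family $(j,0)^2$ for $1\le j\le R-1$ summed via $(u-u^R)/(1-u)$, and the family $(R,0)^{S+1}$ giving $z(B(u,1)+\partial_vB(u,1))$), and you correctly note that parents $(0,S)$ contribute nothing; your extra remarks on the $R=1$ case and on $1-u$ dividing $uQ-B(u,1)$ are harmless refinements that the paper does not spell out.
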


\begin{proof}
We collect the children whose label has second coordinate $0$ and first coordinate
$\ge1$. From Theorem~\ref{thm:succ}: the root contributes the single child
$(1,0)$, giving $zu$. A parent of label $(R,0)$ contributes the child $(R+1,0)$,
giving $zuG(u)$. A parent of label $(R,S)$ with $R\ge1$ contributes $(k+1,0)$ with
multiplicity $2$ for $0\le k\le R-2$, that is, $(j,0)$ twice for $1\le j\le R-1$;
since $\sum_{j=1}^{R-1}u^j=(u-u^R)/(1-u)$, these contribute
$\frac{2z}{1-u}\sum_{n,R\ge1,S\ge0}f_{n,R,S}z^n(u-u^R)
=\frac{2z}{1-u}\bigl(uQ-B(u,1)\bigr)$. Finally such a parent contributes $(R,0)$
with multiplicity $S+1$, giving
$z\sum f_{n,R,S}z^n(S+1)u^R=z\bigl(B(u,1)+\partial_vB(u,1)\bigr)$. A parent of
label $(0,S)$ with $S\ge1$ contributes no child with second coordinate $0$.
\end{proof}

\section{The kernel method}\label{sec:kernel}

\subsection{First kernel}

Multiplying \eqref{eq:1} by $1-v$ gives
\begin{equation}\label{eq:1k}
  A(v)\bigl(1-v+zv^2\bigr) \;=\; \alpha v(1-v) + zv^2P + zv^2Q(1-v).
\end{equation}
Let $V\in\Q[[z]]$ be the branch of $V=1+zV^2$ with $V(0)=1$, i.e.\ the Catalan
generating function. Then $1-V+zV^2=1-V+(V-1)=0$, so substituting $v=V$ into
\eqref{eq:1k} — legitimate because $[z^n]A(v)$ is a polynomial in $v$ — yields
\[
  0=\alpha V(1-V)+zV^2P+zV^2Q(1-V) = -(V-1)\bigl(\alpha V - P + Q(V-1)\bigr),
\]
using $zV^2=V-1$. Since $V-1=z+O(z^2)\neq0$ and $\Q[[z]]$ is an integral domain,
\begin{equation}\label{eq:2}
  P=\alpha V+(V-1)Q .
\end{equation}

\begin{proposition}\label{prop:consequences}
Let $T=C-1=P+Q$. Then
\begin{equation}\label{eq:3}
  Q=\frac{T}{V}-\alpha,\qquad P-\alpha = zVT,\qquad A'(1)=\frac{P-\alpha}{z}-Q .
\end{equation}
\end{proposition}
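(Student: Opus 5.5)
The three identities in \eqref{eq:3} will be derived from \eqref{eq:2}, the relation $zV^2=V-1$, and \eqref{eq:1} evaluated at (or differentiated at) $v=1$. The steps are purely algebraic manipulations in $\Q[[z]]$, carried out in the order stated.

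\emph{First identity.} Substitute \eqref{eq:2} into $T=P+Q$ to get $T=\alpha V+(V-1)Q+Q=V(\alpha+Q)$. Since $V$ has constant term $1$, it is invertible in $\Q[[z]]$, and dividing gives $Q=T/V-\alpha$.

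\emph{Second identity.} Using \eqref{eq:2} again, $P-\alpha=\alpha(V-1)+(V-1)Q=(V-1)(\alpha+Q)$. By the first identity $\alpha+Q=T/V$, so $P-\alpha=(V-1)T/V=zV^2T/V=zVT$, using $V-1=zV^2$.

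\emph{Third identity.} I would work from \eqref{eq:1k}, which is polynomial in $v$ coefficientwise and so can be differentiated in $v$ and evaluated at $v=1$ legitimately, because $[z^n]A(v)$ is a polynomial. Differentiating
\[
A(v)(1-v+zv^2)=\alpha v(1-v)+zv^2P+zv^2Q(1-v)
\]
gives
\[
A'(v)(1-v+zv^2)+A(v)(-1+2zv)=\alpha(1-2v)+2zvP+zQ(2v-3v^2).
\]
At $v=1$ this becomes $zA'(1)+P(2z-1)=-\alpha+2zP-zQ$, that is, $zA'(1)=P-\alpha-zQ$. Dividing by $z$ is legitimate because $P-\alpha=zVT$ is divisible by $z$. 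This yields $A'(1)=(P-\alpha)/z-Q$. As a consistency check, evaluating \eqref{eq:1k} itself at $v=1$ gives $zP=zP$, which is trivial. That is why the derivative is needed.

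The only point requiring care is justifying the evaluation at $v=1$ of both $A$ and $A'$, including the interpretation of the factor $(P-A(v))/(1-v)$ in \eqref{eq:1}. This is handled by using the multiplied form \eqref{eq:1k} rather than \eqref{eq:1}, together with the polynomiality remark from Section~\ref{sec:eqs}. I do not expect any genuine obstacle beyond this.
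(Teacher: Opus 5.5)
Your proposal is correct and follows the paper's proof essentially step for step: the first two identities come from \eqref{eq:2} together with $zV^2=V-1$, and the third comes from differentiating \eqref{eq:1k} in $v$ and setting $v=1$. You also justify the division by $z$ using $P-\alpha=zVT$, a detail the paper leaves implicit.
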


\begin{proof}
From \eqref{eq:2}, $T=P+Q=\alpha V+VQ$, giving the first identity; then
$P-\alpha=T-Q-\alpha=T(1-1/V)=T(V-1)/V=zVT$ since $(V-1)/V=zV$. For the third,
differentiate \eqref{eq:1k} with respect to $v$ and set $v=1$:
\[
  A'(1)\cdot z + A(1)(-1+2z) = -\alpha + 2zP - zQ,
\]
and $A(1)=P$, so $zA'(1)=P-\alpha-zQ$.
\end{proof}

\subsection{Second kernel}

Substituting \eqref{eq:4} into \eqref{eq:5} (using
$B(u,1)=G(u)(1+u^{-1})-(z+\alpha)+\frac{zu}{1-zu}(P-z-\alpha)$ and
$\partial_vB(u,1)=G(u)/u-(z+\alpha)+\frac{zu}{1-zu}(A'(1)-z-\alpha)$) and clearing
the denominators $u$, $1-u$ and $1-zu$ produces an identity of the form
\begin{equation}\label{eq:5k}
  \bigl(1-zu\bigr)\bigl(1+3z-u+zu^2\bigr)\,G(u) \;=\; zu\,\Phi(u),
\end{equation}
where
\[
  \Phi(u)= zA'(1)(1-u)-zP(1+u)+2Q(1-zu)+2\alpha + zu^2 - zu - u + 2z + 1 .
\]
In particular the kernel is $(1+3z-u+zu^2)/(1-u)$ up to the invertible factor
$1-zu$.

Let $U\in\Q[[z]]$ be the branch of $U=1+3z+zU^2$ with $U(0)=1$, explicitly
$U=\bigl(1-\sqrt{(1-6z)(1+2z)}\bigr)/(2z)$. Then $1+3z-U+zU^2=0$. Substituting
$u=U$ into \eqref{eq:5k} is legitimate, since after clearing denominators every
coefficient of $z^n$ on both sides is a polynomial in $u$. The left side vanishes,
and $zU\neq0$, so $\Phi(U)=0$. Using $zU^2=U-1-3z$ to reduce,
\[
  zA'(1)(1-U) - zP(1+U) + 2Q(1-zU) + 2\alpha - z(1+U) = 0 ,
\]
since $zU^2-zU-U+2z+1=(U-1-3z)-zU-U+2z+1=-z(1+U)$.
Substituting the three identities \eqref{eq:3}, the unknown $\alpha$ cancels
identically. Reducing the result with $zV^2=V-1$ and multiplying through by $V$
leaves
\begin{equation}\label{eq:final}
  T\bigl(1+U+V-UV\bigr) - zTV(1+U) - zV(1+U) = 0,
\end{equation}
that is $T=\dfrac{zV(1+U)}{1+U+V-UV-zV(1+U)}$. Multiplying numerator and
denominator by $1/V$ and using $zV^2=V-1$ in the form $(V-1)/V=zV$, this
equals $z(1+U)/\bigl(2-z(1+U)(1+V)\bigr)$, and $C=1+T$ gives
Theorem~\ref{thm:main}. Note $2-z(1+U)(1+V)$ has constant term $2$, so the
quotient is a well-defined element of $\Q[[z]]$.

\begin{proof}[Proof of Corollary~\ref{cor:alg}]
Put $L(V)=T\bigl(2-z(1+U)(1+V)\bigr)-z(1+U)$ with $T=C-1$. Eliminating $V$ between
$zV^2-V+1$ and $L(V)$, and then $U$ between the result and $zU^2-U+1+3z$, the
iterated resultant equals $4z^4$ times the quartic of Corollary~\ref{cor:alg};
since $z\neq0$ in $\Q(z)$, that quartic annihilates $C$.

We claim it is the minimal polynomial of $C$, so that the degree is exactly $4$.
Set
\[
  \sigma_1 = \sqrt{(1-6z)(1+2z)},\qquad \sigma_2=\sqrt{1-4z},
\]
so that $U=(1-\sigma_1)/(2z)$ and $V=(1-\sigma_2)/(2z)$. The polynomials
$(1-6z)(1+2z)$, $(1-4z)$ and their product $(1-6z)(1+2z)(1-4z)$ all have simple
roots (namely $1/6,-1/2$; $1/4$; and $1/6,-1/2,1/4$), hence none is a square in
$\Q(z)$. Therefore $K=\Q(z)(\sigma_1,\sigma_2)$ is a biquadratic extension of
$\Q(z)$ with Galois group $(\Z/2\Z)^2$, generated by the involutions
$\tau_1:\sigma_1\mapsto-\sigma_1$ (fixing $\sigma_2$) and
$\tau_2:\sigma_2\mapsto-\sigma_2$ (fixing $\sigma_1$).

Now $C=1+T$ with $T=z(1+U)/\bigl(2-z(1+U)(1+V)\bigr)\in K$. Expanding the four
conjugates of $T$ as Laurent series at $z=0$:
\begin{align*}
  T &= z+4z^2+17z^3+82z^4+\cdots,\\
  \tau_1(T) &= -z^{-1}-6-47z-430z^2-3931z^3-\cdots,\\
  \tau_2(T) &= -\tfrac12-\tfrac14 z+\tfrac{11}{8}z^2+\tfrac{67}{16}z^3+\cdots,\\
  \tau_1\tau_2(T) &= -z-2z^2-5z^3-22z^4-\cdots.
\end{align*}
These four series are pairwise distinct, so no nontrivial element of
$\operatorname{Gal}(K/\Q(z))$ fixes $T$. Hence $\Q(z)(C)=\Q(z)(T)=K$ has degree
$4$ over $\Q(z)$, the annihilating quartic above is (a scalar multiple of) the
minimal polynomial of $C$, and in particular it is irreducible over $\Q(z)$.

Finally, the leading coefficient $-2z+17z^2+12z^3+4z^4$ of the quartic vanishes at
$z=0$, and the specialisation of the quartic at $z=0$ is $-(C-1)^2(2C-1)$; thus
$C=1$ is a double root there and the condition $C(0)=1$ alone does not determine
the branch. There are exactly two formal power series solutions with constant term
$1$, namely $C$ and $1+\tau_1\tau_2(T)=1-z-2z^2-5z^3-22z^4-90z^5-\cdots$,
distinguished by $[z]C=1$.
\end{proof}

\begin{proof}[Proof of Corollary~\ref{cor:asy}]
Write $D(z)=2-z(1+U(z))(1+V(z))$, so that $C=1+z(1+U)/D$.

The series $V$ has radius of convergence $1/4$ and $U$ has radius of convergence
$1/6$, the latter with a square-root branch point at $z=1/6$; both have
nonnegative coefficients, and both converge at $z=1/6$, where $U(1/6)=3$ and
$V(1/6)=3-\sqrt3$. Nonnegativity of the coefficients gives
$|U(z)|\le U(1/6)=3$ and $|V(z)|\le V(1/6)=3-\sqrt3$ throughout the closed disc
$|z|\le1/6$, whence
\[
  \bigl|z\,(1+U(z))(1+V(z))\bigr| \;\le\; \tfrac16\cdot 4\cdot\bigl(4-\sqrt3\bigr)
  \;=\; \tfrac{2(4-\sqrt3)}{3} \;=\; 1.5119\ldots\;<\;2 ,
\]
so $|D(z)|\ge 2-\tfrac{2(4-\sqrt3)}{3}=\tfrac{2(\sqrt3-1)}{3}>0$ on all of
$|z|\le1/6$. (The bound is attained at $z=1/6$, where $D(1/6)=2(\sqrt3-1)/3$.) In
particular $D$ has no zero in the closed disc, real or complex, so the only
singularity of $C$ in $|z|\le1/6$ is the branch point of $U$ at $z=1/6$: it is the
unique dominant singularity, and $C$ is amenable to singularity analysis there.

At $z=1/6$ we have $U(1/6)=3$, $V(1/6)=3-\sqrt3$, and
\[
  U(z)=3-2\sqrt3\,\sqrt{1-6z}+O(1-6z),\qquad
  \frac{\partial C}{\partial U}\Big|_{z=1/6}=\frac{2z}{D^2}\Big|_{z=1/6}=\frac{3(2+\sqrt3)}{8},
\]
whence $C(z)=\frac{3+\sqrt3}{2}-\frac{9+6\sqrt3}{4}\sqrt{1-6z}+O(1-6z)$. Standard
singularity analysis \cite[Thm.~VI.4]{FS} gives
$c_n\sim\frac{9+6\sqrt3}{4}\cdot\frac{6^nn^{-3/2}}{2\sqrt\pi}$.
\end{proof}

\begin{remark}\label{rem:bij}
The growth constant $6$ enters purely analytically, as the branch point of
$U=1+3z+zU^2$. The series $U$ is a $(1,3)$-weighted variant of the Catalan
generating function $V=1+zV^2$, but we do not know a combinatorial
interpretation, within $\W_n$ itself, of the objects counted by $U$ that would
explain the weight $3$ -- and hence no bijective explanation of the growth rate
$6$. Finding one, along with a combinatorial explanation of the appearance of
the ordinary Catalan series $V$ in Theorem~\ref{thm:main}, is an attractive
open problem.
\end{remark}

\section{Verification}\label{sec:verif}

The following independent checks were carried out.

\begin{enumerate}
\item Direct enumeration of all $(2n)!/2^n$ arrangements, filtered by the literal
  definitions (N) and (A), reproduces $c_n$ for $n\le5$.
\item A separate program building words left to right, opening arcs and closing
  the earliest open arc, with pruning on (A), gives
  \[
    \begin{gathered}
    1,\ 4,\ 17,\ 82,\ 406,\ 2070,\ 10729,\ 56394,\ 299646,\\
    1606816,\ 8683562,\ 47245644,\ 258581288,\ 1422695978
    \end{gathered}
  \]
  for $1\le n\le14$. The first eleven terms agree with A382361; the last three are
  new.
\item Theorem~\ref{thm:succ} was verified exhaustively for every object of size
  $n\le12$: for each of the $47\,245\,644$ objects of $\W_{12}$ and each smaller
  object, the full multiset of child labels was computed by inserting the new
  maximum in every one of the $\binom{2n+2}{2}$ ways and testing conditions (N)
  and (A) directly from their definitions (not via Lemma~\ref{lem:insert}), and
  compared with the rule; all agree, the label of
  Lemma~\ref{lem:shape} is always defined, and for $n\le7$ the children of $\W_n$
  were confirmed to partition $\W_{n+1}$.
\item Equations \eqref{eq:1}, \eqref{eq:4}, and \eqref{eq:5}, together with the
  identities \eqref{eq:id1}--\eqref{eq:id3}, were verified as truncated power
  series to order $z^{22}$ against the array $f_{n,r,s}$ generated by
  Theorem~\ref{thm:succ}, with $u$ and $v$ specialised to several rational values,
  in exact arithmetic.
\item The series of Theorem~\ref{thm:main} agrees with the transfer matrix of
  Theorem~\ref{thm:succ} to order $z^{45}$ in exact arithmetic, and with item (2) above for
  $n\le14$.
\item The resultant identity of Corollary~\ref{cor:alg} was verified by exact
  integer polynomial arithmetic, and the asymptotic constant of
  Corollary~\ref{cor:asy} was confirmed against $260$ exact terms by Richardson
  extrapolation.
\end{enumerate}

\section*{Acknowledgements}

The problem was posed by Elizalde and Luo \cite{EL}.

\emph{Funding.} This research received no grant from any funding agency,
commercial entity, or academic institution. All costs of the work -- hardware,
computation, and AI-assistant subscriptions -- were financed personally by the
author.

\emph{Disclosure of AI assistance.} This work was carried out by the author with
substantial assistance from large language models. The author chose the problem,
set the direction of attack, decided which lines of argument to pursue and which
to abandon, and assembled the pieces developed in separate sessions into a single
argument, contributing to the writing of those pieces along the way. The
generating-tree approach and the kernel-method derivation were developed through
iterative collaboration between the author and GPT (OpenAI). Claude (Anthropic)
assisted in expanding the derivation into a complete manuscript and in developing
and running the computational verification described in Section~\ref{sec:verif}.
The proofs, calculations, and code were subsequently subjected to separate
critical audits using GPT and Claude and were revised in response to those
audits. The verification code is supplied as supplementary material so that the
stated checks can be reproduced. An expository review by
Gemini (Google) prompted the worked example in Section~\ref{sec:eqs} and
Remark~\ref{rem:bij}. The author evaluated and revised all outputs, made the decisions
about what to pursue and publish, and takes full responsibility for the content
and its correctness.

\end{document}